\newcommand{\CI}{{\mathcal C}^\infty }
\newcommand{\R}{{\mathbb R}}
\newcommand{\HH}{{\mathcal H}}
\newcommand{\N}{{\mathbb N}}
\newcommand{\spec}{\operatorname{spec}}
\theoremstyle{plain}
\newtheorem{thm}{Theorem}
\newtheorem{prop}{Proposition}[section]
\newtheorem{lem}[prop]{Lemma}
\theoremstyle{definition}
\newtheorem{remk}{Remark}[section]
\numberwithin{equation}{section}
\def\squarebox#1{\hbox to #1{\hfill\vbox to #1{\vfill}}}
\newcommand{\la}{\langle}
\newcommand{\ra}{\rangle}
\newcommand{\und}{\frac{1}{2}}
\newcommand{\dom}{\mbox{dom}}
\newcommand{\Kcal}{{\mathcal{K}}}
\newcommand{\Dcal}{{\mathcal D}} 
\newcommand{\Lcal}{{\mathcal L}} 
\newcommand{\id}{\mbox{id}}
\title
{Finite deficiency indices and uniform remainder in Weyl's law}
\author[L. Hillairet]
{Luc Hillairet}
\email{Luc.Hillairet@math.univ-nantes.fr}
\address{UMR CNRS 6629-Universit\'{e} de Nantes, 2 rue de la Houssini\`{e}re, \\
BP 92 208, F-44 322 Nantes Cedex 3, France}
\begin{document}
\maketitle
\section*{Introduction}
Von Neumann theory classifies all self-adjoint extensions 
of a given symmetric operator $A$ in terms of the so-called {\em deficiency indices}.
Since the choice of the self-adjoint condition reflects the physics that is underlying 
the problem, it is natural to ask how the spectra of two different self-adjoint 
extensions $A_0$ and $A_1$ can differ. 

In some cases, the deficiency indices are finite. 
This happens for instance in the following interesting settings from mathematical physics 
\begin{itemize}
\item[-] Quantum graphs (see \cite{BE, HK} and section \ref{qg} below),
\item[-] Pseudo-Laplacians, \v{S}eba billiards (see \cite{CdV, KMW}, and section \ref{PL} below),
\item[-] Manifolds with conical singularities (see \cite{Kokotov, KLP}),   
\item[-] Hybrid manifolds (see \cite{PRY}).
\end{itemize}

In any of these settings we prove the following theorem (see section \ref{secthm} 
and the applications for a more precise version)

\begin{thm}
Let $A$ be the symmetric operator associated with one of the 
preceding settings. There exists a constant $C$ such that 
for any self-adjoint extensions $A_0$ and $A_1$ of $A$ we have 
\[
\forall E,~~ \left| N_1(E)-N_0(E) \right | \leq C
\]
where $N_i$ denotes the spectral counting function of $A_i.$
\end{thm}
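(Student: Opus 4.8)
The plan is to turn the comparison of $N_0$ and $N_1$ into a statement about a finite-rank perturbation of resolvents and then to feed this into the min-max principle. In each of the listed settings $A$ is semibounded with finite, necessarily equal, deficiency indices $d$, and a semibounded symmetric operator with finite deficiency indices has only semibounded self-adjoint extensions; I would record this first, so as to fix a real number $-\mu$ lying strictly below $\spec(A_0)\cup\spec(A_1)$. For such a $\mu$ the resolvents $R_i\defeq(A_i+\mu)^{-1}$ are positive, self-adjoint and compact, compactness being exactly the discreteness underlying Weyl's law. Since $\lambda\mapsto(\lambda+\mu)^{-1}$ is strictly decreasing, the counting functions transfer exactly: writing $M_i(t)$ for the number of eigenvalues of $R_i$ that are $\ge t$, one has
\[
N_i(E)=M_i\bigl((E+\mu)^{-1}\bigr)\quad\text{for }E>-\mu,
\]
while $N_i(E)=0$ for $E\le-\mu$.

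The algebraic heart of the matter is von Neumann--Krein theory. As $A_0$ and $A_1$ are self-adjoint extensions of the same symmetric operator whose deficiency subspaces have dimension $d$, Krein's resolvent formula represents $R_1-R_0$ through the deficiency vectors of $A$ at the point $-\mu$, whence
\[
\rank\,(R_1-R_0)\le d .
\]
Thus $K\defeq R_1-R_0$ is self-adjoint of rank at most $d$, so $\kerr K$ has codimension at most $d$.

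It remains to convert this finite rank into a bound uniform in the spectral parameter. Fix $t>0$ and let $V_0$ be the spectral subspace of $R_0$ for the interval $[t,\infty)$, so that $\dim V_0=M_0(t)$. Since $\kerr K$ has codimension at most $d$, the orthogonal projection of $V_0$ onto $(\kerr K)^\perp$ has rank at most $d$, and hence $V_0\cap\kerr K$ has dimension at least $M_0(t)-d$. On this subspace $R_1u=R_0u$, so $\langle R_1u,u\rangle\ge t\|u\|^2$, and the variational description of $M_1(t)$ gives $M_1(t)\ge M_0(t)-d$. Interchanging the roles of $A_0$ and $A_1$ yields $\left|M_1(t)-M_0(t)\right|\le d$ for every $t>0$; combined with the transfer identity this proves the theorem with $C=d$, the range $E\le-\mu$ being trivial.

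The one step genuinely requiring care is the rank bound $\rank(R_1-R_0)\le d$, namely the verification, in each of the four geometric settings, that the relevant operator does have finite deficiency index and that Krein's formula applies; the semiboundedness of all extensions, used to select $\mu$, deserves the same scrutiny. Should $A$ fail to be semibounded, I would instead run the min-max argument on the positive operators $\Im (A_i-z)^{-1}$ at a fixed non-real $z$, whose difference still has finite rank, at the cost of replacing $d$ by a slightly larger constant.
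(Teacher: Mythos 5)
Your argument is correct, but it is not the paper's proof: it is essentially the Birman--Solomjak route, which the author explicitly cites (``using that the difference of the resolvents $(A_0-z)^{-1}-(A_1-z)^{-1}$ is finite rank for some $z$'') and deliberately replaces by a resolvent-free argument. The paper stays with the operators themselves: von Neumann's parameterization shows that $\dom(\bar{A})$ has codimension $d$ in $\dom(A_1)$, so any $(n+d)$-dimensional $F\subset\dom(A_1)$ meets $\dom(\bar{A})$ in dimension at least $n$; since $\la A_1u,u\ra=\la A_0u,u\ra$ there, min-max gives the interlacing $\lambda_{n+d}(A_1)\geq\lambda_n(A_0)$, and by symmetry $|N_1(E)-N_0(E)|\leq d$; the same decomposition also yields semiboundedness of $A_1$ (at most $n+d-1$ negative eigenvalues) and compactness of its resolvent (via the lemma that $\dom(A^*)$ embeds compactly in $\HH$), so the paper is entirely self-contained. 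Your subspace $\kerr(R_1-R_0)$, of codimension at most $d$, is exactly the resolvent-side avatar of $\dom(\bar{A})$, and your rank bound does not even need the full Krein formula: both resolvents agree on $\operatorname{ran}(\bar{A}+\mu)$, which has codimension $d$ because $(A_0+\mu)^{-1}$ is a bijection of $\HH$ onto $\dom(A_0)$ and $\dom(\bar{A})$ has codimension $d$ there. What your route buys: compactness of $R_1$ comes for free ($R_1=R_0+K$ with $K$ finite rank --- you should say this rather than merely assert compactness), and the scheme matches the cited reference. What it costs: you import the semiboundedness of every extension from classical theory (Akhiezer--Glazman, sec.\ 85, which the paper also cites) in order to choose a real $-\mu$, whereas the paper proves it; note that your own machinery closes this loop --- the rank bound at non-real $z$ makes $(A_1-z)^{-1}$ compact, hence $\spec(A_1)$ discrete, and then at a real $-\mu\in\rho(A_0)\cap\rho(A_1)$ below $\spec(A_0)$ the operator $R_1$, a rank-$d$ perturbation of the positive $R_0$, has at most $d$ negative eigenvalues, so $A_1$ has at most $d$ eigenvalues below $-\mu$. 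One genuine caveat: your fallback for a non-semibounded $A$ via $\Im(A_i-z)^{-1}$ does not work as stated, since $\lambda\mapsto\Im\bigl((\lambda-z)^{-1}\bigr)$ is not monotone on $\R$ and counting functions do not transfer through it; happily all four settings have $A\geq 0$, so your main line suffices.
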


This fact actually derives from \cite{BS} ch. 9 sec. 3
\footnote{We are grateful to Alexander Pushnitski for indicating to us this reference.}. 
Our proof is slightly different and based on the min-max principle but 
the underlying ideas are similar.  
 
Motivation for this result came principally from \cite{KMW} and the \v{S}eba billiard setting. 
In this case we can take $A_0$ to be the standard 
Dirichlet Laplace operator in $R$, and this theorem proves that the remainder 
in Weyl's law is,  up to a $O(1)$ term, uniform with respect to the location of 
the Delta potential. In the case of one delta potential the uniform bound can also be derived 
from the fact that the spectra of the pseudo-laplacian and the usual laplacian are 
interlaced (see \cite{CdV} for instance).  

In constrast with \cite{BE, HK, KLP, PRY} we consider a rather 
crude spectral invariant. Moreover, our result relies on the min-max principle only 
which is less sophisticated than the analysis performed in the former references. 
It should be noted, however, that our result is not a straightforward 
byproduct of these results and should more likely be considered as a first step. 
From our perspective, it is quite interesting to have a general method 
allowing to get quite good hold on the spectral counting function before moving 
on to more complete spectral invariants such as heat kernel or resolvent estimates.

\textsc{Acknowledgments :} 
We are grateful to Alexander Pushnitski, Alexey Kokotov and Jens Marklof for 
useful comments on the first version of this note that resulted, in particular, 
in great improvment of the bibliography.
 
\section{Setting and Notations}
We begin by recalling some basic facts from spectral theory of self-adjoint 
operators as well as Von Neumann theory of self-adjoint extensions 
of a symmetric operator. We will use \cite{RS1,RS2} and \cite{CH} 
as references. 

\subsection{Basic Spectral Theory}
We consider a Hilbert space $\HH$ with scalar product 
$\langle \cdot ,\cdot \rangle$ and associated norm $\|\cdot\|.$ 

On $\HH$ we consider a symmetric operator $A$ with domain $\dom(A)$ and its adjoint $A^*.$ 
The graph norm is defined on $\dom(A)$ by $\|u\|_A^2\,=\,\|u\|^2+\|Au\|^2.$

An operator is self-adjoint if $A=A^*$. 
It has compact resolvent if the injection from $\dom(A)$ into $\HH$ 
is compact.  

The spectrum of a self-adjoint operator with compact resolvent consists 
in eigenvalues of finite multiplicities, that form a discrete set in $\R.$ 
There exists an orthonormal basis consisting of eigenvectors. 

If there exists $C\in \R$ such that $\forall u\in \dom(A),~ 
\|Au\|\geq C\|u\|,$ the operator is called {\em semibounded}.

For a semibounded self-adjoint operator with compact resolvent, 
the spectrum can be ordered into a non-decreasing sequence 
$(\lambda_n)_{n\in \N}.$

The spectral counting function is then defined by 
\[ 
N(E)\,=\, \mbox{Card}\{ \lambda_n\leq E\},\]
and by Courant-Hilbert min-max principle (see \cite{CH}) we have 

\begin{equation}\label{minmax}
\lambda_n\,=\, \min \left \{ \max\left \{ \frac{\la Au,u\ra}{\|u\|^2},~u\in F\backslash\{0\}\, \right \},
~ F\subset \HH, F ~\mbox{vector space s.t. }\dim F=n \right \}.
\end{equation}

\subsection{Von Neumann Theory} \hfill \\
This section summarizes section X.1 of \cite{RS2}. 
We define 
\begin{gather*} 
\Kcal^{\pm}\,=\,\ker(A^*\mp \id) \\
d_\pm\,=\,\dim ( \Kcal^\pm).
\end{gather*}

We recall that we have the following decomposition of $\dom(A^*)$ 
(see Lemma in section X.1 of \cite{RS2}).

\[ 
\dom(A^*)\,=\, \dom(\bar{A})\oplus \Kcal^+ \oplus \Kcal^-,
\]
and that the decomposition is orthogonal with respect to the scalar product 
$\langle \cdot,\cdot \rangle_{A^*}$ defined on $\dom(A^*)$ by 
\[ 
\forall u,v\in \dom(A^*),~ ~\langle u,v \rangle_{A^*}\,:=\,
\langle u,v \rangle+\langle A^*u,A^*v \rangle.
\]
We denote by $\pi_\pm$ the orthogonal projection from $\dom(A^*)$ onto 
$\Kcal^\pm$ and by $\pi_0$ the orthogonal projection onto $\dom(\bar{A})$ 
(so that $\pi_0\,=\,id-\pi_+-\pi_-).$

The following theorem (theorem X.2 and corollary of \cite{RS2}) 
provides a parameterization of all self-adjoint extensions of $A$
\begin{thm}\label{VN}
$A$ admits self-adjoint extensions if and only if $d_+\,=\,d_-.$\hfill \\ 
For any self-adjoint extension $A_{sa}$ of $A$, 
there exists a unique isometry $U$ from 
$\Kcal^+$ onto $\Kcal^-$ such that $\forall u\in \dom(A_{sa})$, 
$U\pi_+(u)\,=\,\pi_-(u).$
\end{thm}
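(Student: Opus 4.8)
The plan is to realise every self-adjoint extension as a restriction of $A^*$ and to read off the self-adjointness condition from the boundary sesquilinear form. First I would observe that if $B$ is any self-adjoint extension of $A$, then $A\subseteq B=B^*\subseteq A^*$, so that $\dom(\bar A)\subseteq\dom(B)\subseteq\dom(A^*)$ and $B$ is simply the restriction of $A^*$ to $\dom(B)$. Thus the entire problem is to determine which intermediate subspaces $\dom(\bar A)\subseteq D\subseteq\dom(A^*)$ make $A^*\restriction_D$ self-adjoint.

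The central tool is the boundary form
\[
\Gamma(u,v)\,=\,\la A^*u,v\ra-\la u,A^*v\ra,\qquad u,v\in\dom(A^*).
\]
Writing $u=u_0+u_++u_-$ and $v=v_0+v_++v_-$ along the decomposition, using that $A^*$ acts as $\pm i$ on $\Kcal^\pm$, and noting that $\Gamma(u_0,\cdot)\equiv 0$ for every $u_0\in\dom(\bar A)$ (immediate from the definition of the adjoint, which makes all cross terms with a $\dom(\bar A)$ component drop out), I would obtain
\[
\Gamma(u,v)\,=\,2i\bigl(\la u_+,v_+\ra-\la u_-,v_-\ra\bigr).
\]
Two consequences follow at once: $\dom(\bar A)$ is exactly the radical of $\Gamma$, and a restriction $A^*\restriction_D$ is symmetric precisely when $\la u_+,v_+\ra=\la u_-,v_-\ra$ for all $u,v\in D$.

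From here the isometry appears naturally. The symmetry condition forces $u_+\mapsto u_-$ to be a well-defined linear isometry $U$ from $\pi_+(D)\subseteq\Kcal^+$ onto $\pi_-(D)\subseteq\Kcal^-$ (well-definedness: if $w\in D$ has $\pi_+(w)=0$ then $\|\pi_-(w)\|^2=\la w_+,w_+\ra-0=0$, so $\pi_-(w)=0$). Hence $D=\{u_0+u_++Uu_+\}$ is the graph of $U$ over $\dom(\bar A)$, which is exactly the relation $U\pi_+(u)=\pi_-(u)$; moreover $U$ is determined on all of $\pi_+(D)$ by the domain, giving uniqueness. To pass from symmetric to self-adjoint I would compute $\dom\bigl((A^*\restriction_D)^*\bigr)$ through $\Gamma$ and show that self-adjointness is equivalent to maximality of $D$ among the subspaces on which $\Gamma$ vanishes, which occurs exactly when $U$ is defined on all of $\Kcal^+$ and maps onto all of $\Kcal^-$. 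Consequently a self-adjoint extension exists iff such a surjective isometry exists, i.e. iff $\dim\Kcal^+=\dim\Kcal^-$, that is $d_+=d_-$; and each self-adjoint extension corresponds to a unique such $U$, proving both assertions.

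The main obstacle is this maximality step: establishing that a symmetric restriction is self-adjoint precisely when the associated $U$ is defined on the full space $\Kcal^+$ and is onto $\Kcal^-$. Concretely one must verify that if $U$ is only a partial isometry then $D\subsetneq\dom\bigl((A^*\restriction_D)^*\bigr)$, so the extension is merely symmetric, whereas a genuine unitary $U\colon\Kcal^+\to\Kcal^-$ forces the two domains to coincide. This requires identifying, via the explicit formula for $\Gamma$, the orthogonality conditions that characterise the adjoint domain, and is where the isotropy-versus-maximality dichotomy has to be handled with care.
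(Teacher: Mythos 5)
This theorem is quoted background in the paper --- it is Theorem X.2 and its corollary in \cite{RS2}, cited without proof --- and your proposal reconstructs precisely the standard von Neumann argument behind that reference: the graph-orthogonal decomposition $\dom(A^*)=\dom(\bar{A})\oplus\Kcal^+\oplus\Kcal^-$, the boundary form $\Gamma(u,v)=\la A^*u,v\ra-\la u,A^*v\ra$ reducing to $2i\bigl(\la u_+,v_+\ra-\la u_-,v_-\ra\bigr)$, symmetric restrictions as graphs of partial isometries $U$, and self-adjointness as maximality, i.e.\ $U$ an isometry of all of $\Kcal^+$ onto all of $\Kcal^-$. Your computations are correct (including silently reading the paper's $\Kcal^\pm=\ker(A^*\mp\id)$ as the intended $\ker(A^*\mp i)$), and the maximality step you flag is exactly the routine adjoint-domain computation $\dom(B^*)=\{v\in\dom(A^*):\Gamma(u,v)=0\ \forall u\in D\}$, which shows $\dom(B^*)=D$ if and only if $U$ has initial space $\Kcal^+$ and range $\Kcal^-$, just as you describe.
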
   

\section{The theorem}\label{secthm}
Using the notations of the preceding section we have 
\begin{thm}
Let $A$ be a symmetric operator with equal finite deficiency indices : 
\[ d_+\,=\,d_-= d<\infty.
\]
Suppose that there exists $A_0$ a self-adjoint extension 
of $A$ such that 
\begin{itemize}
\item[(i)] $A_0$ has compact resolvent,
\item[(ii)] $A_0$ is semibounded
\end{itemize}
Then 
\begin{enumerate}
\item Any other self-adjoint extension also has compact resolvent and 
is semibounded. 
\item There exist $E_0$ such that, for any other 
self-adjoint extension $A_1$ the following holds 
\[ 
\forall E\in \R,~~~ \left | N_1(E)-N_0(E) \right | \leq d
\]
where $N_i(E)$ denotes the spectral counting function of $A_i.$
\end{enumerate}
\end{thm}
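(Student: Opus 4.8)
The plan is to compare $A_0$ and $A_1$ through their common restriction. Write $A_{01}$ for the restriction of $A^*$ to $\dom(A_{01}) := \dom(A_0)\cap\dom(A_1)$. Since any self-adjoint extension satisfies $\bar A \subseteq A_i \subseteq A^*$, both $A_0$ and $A_1$ agree with $A^*$, hence with each other, on $\dom(A_{01})$; in particular $\la A_0 u, u\ra = \la A_1 u, u\ra$ for every $u \in \dom(A_{01})$. The first point to record is a dimension count: by the von Neumann decomposition $\dom(A^*) = \dom(\bar A)\oplus\Kcal^+\oplus\Kcal^-$ and Theorem \ref{VN}, each $\dom(A_i)$ contains $\dom(\bar A)$ with $\dim\bigl(\dom(A_i)/\dom(\bar A)\bigr) = d$. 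Since $\dom(\bar A)\subseteq\dom(A_{01})\subseteq\dom(A_i)$, the quotient $\dom(A_i)/\dom(A_{01})$ has dimension at most $d$; that is, $\dom(A_{01})$ has codimension at most $d$ in each of $\dom(A_0)$ and $\dom(A_1)$. This codimension bound is the only quantitative input the argument needs.

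For point (1) I would argue at the level of graph norms. Recall that $A_i$ has compact resolvent precisely when the embedding of $\bigl(\dom(A_i),\|\cdot\|_{A_i}\bigr)$ into $\HH$ is compact. On $\dom(\bar A)$ the graph norms of $A_0$, $A_1$ and $\bar A$ all coincide (the operators agree there), so the compactness of $\dom(A_0)\hookrightarrow\HH$ restricts to compactness of $\dom(\bar A)\hookrightarrow\HH$. Now decompose $\dom(A_1) = \dom(\bar A)\oplus W_1$ with $W_1\subseteq\Kcal^+\oplus\Kcal^-$ finite-dimensional; as $\dom(\bar A)$ is closed in $\bigl(\dom(A_1),\|\cdot\|_{A_1}\bigr)$ and $W_1$ is finite-dimensional, this is a topological direct sum with bounded projections. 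The embedding of $\dom(A_1)$ into $\HH$ is then the sum of (compact embedding of $\dom(\bar A)$)$\,\circ\,$(bounded projection) and a finite-rank piece coming from $W_1$, hence compact. Thus $A_1$ has compact resolvent. (Alternatively one may invoke that the resolvents of two extensions differ by an operator of rank at most $d$, so that $A_0$ and $A_1$ share the same, here empty, essential spectrum.)

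The heart of the matter, point (2) together with semiboundedness, I would extract from the min-max principle \eqref{minmax}. Fix $E$ and suppose $u_1,\dots,u_N$ are orthonormal eigenvectors of $A_1$ with eigenvalues $\le E$ (these exist and are genuine eigenvectors because the spectrum is discrete by point (1)). On $F_1 := \mathrm{span}(u_1,\dots,u_N)$ one has $\la A_1 u,u\ra \le E\|u\|^2$. By the codimension bound, $G := F_1\cap\dom(A_{01})$ satisfies $\dim G \ge N-d$, and for $u \in G$ the form equality gives $\la A_0 u,u\ra = \la A_1 u,u\ra \le E\|u\|^2$. Feeding $G$ into \eqref{minmax} for $A_0$ yields $N_0(E)\ge \dim G \ge N-d$, i.e. $N \le N_0(E)+d$. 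Since $N_0(E)$ is finite, $N$ cannot be taken arbitrarily large: this simultaneously shows that $A_1$ has only finitely many eigenvalues below any level, so $A_1$ is semibounded and $N_1(E)$ is well defined and finite, and, taking $N=N_1(E)$, that $N_1(E)\le N_0(E)+d$. Exchanging the roles of $A_0$ and $A_1$ gives $N_0(E)\le N_1(E)+d$, and the two inequalities combine to $|N_1(E)-N_0(E)|\le d$ for every $E$; one may take $E_0$ to be any uniform lower bound for the spectra.

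I expect the main obstacle to be organizing the logic so that it is not circular: the natural statement $N_1(E)\le N_0(E)+d$ presupposes that $N_1$ makes sense, i.e. that $A_1$ is semibounded, yet semiboundedness is itself what the estimate is meant to deliver. The device that resolves this is to phrase the min-max step in terms of arbitrary finite families of low eigenvectors (available once discreteness is known from point (1)) rather than in terms of $N_1(E)$ directly, so that the uniform bound $N\le N_0(E)+d$ both forbids an unbounded-below spectrum and yields the counting inequality at once. The remaining technical points, that $\dom(\bar A)$ is closed in the graph norm and that a closed subspace plus a finite-dimensional one splits topologically, are standard and I would dispatch them briefly.
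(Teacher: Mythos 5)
Your proof is correct and follows essentially the same route as the paper: the von Neumann decomposition gives a common subspace of codimension at most $d$ in each domain (you use $\dom(A_0)\cap\dom(A_1)$ where the paper uses $\dom(\bar A)$, an immaterial difference), compactness of the embedding passes from $\dom(A_0)$ to $\dom(A_1)$ via a finite-dimensional complement, and the min-max principle applied to spans of low eigenvectors intersected with that common subspace yields $N_1(E)\le N_0(E)+d$ and, by symmetry, the two-sided bound. Your reorganization --- deriving semiboundedness of $A_1$ as a byproduct of the counting estimate rather than as a separate step --- is a minor streamlining of the same argument, and your handling of the potential circularity matches the paper's logic.
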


As indicated in the introduction this result is actually already 
proved in \cite{BS} using that the difference of 
the resolvents $(A_0-z)^{-1}-(A_1-z)^{-1}$ is finite rank for some $z$.

\section{Proofs}
\subsection{$A_1$ has compact resolvent}
We prove this fact by proving the stronger lemma.
\begin{lem}
If $A_0$ has compact resolvent then the injection from $\dom(A^*)$ 
(equipped with $\|\cdot \|_{A^*}$) into $\HH$ is compact.
\end{lem}
\begin{proof}
Since $A_0$ has compact resolvent, the injection from $(\dom(A_0),\|\cdot \|_{A_0})$ into 
$\HH$ is compact. But $\dom(\bar{A}$) is closed in $\dom(A^*)$ for 
$\|\cdot\|_{A^*}$. Since $\|\cdot\|_{A_0}$ coincide 
with $\|\cdot \|_{A^*}$ on $A_0$, the injection 
from $(\dom(\bar{A}),\|\cdot\|_{A^*})$ is also compact.

Let $(u_n)_{n\in\N}$ be a sequence 
in $\dom(A^*)$ that is $\|\cdot\|_{A^*}$-bounded. 
We can extract a subsequence from $\pi_0 u_n$ that converges in $\HH$
since the injection from $(\dom(\bar{A}),\|\cdot\|_{A^*})$ into $\HH$ is compact.
On $\Kcal^\pm,$ $\|\cdot \|_{A^*}$ is equivalent to $\|\cdot \|$ and, since 
$d^\pm$ are finite we can also extract convergent subsequences. 
This proves the Lemma.
\end{proof}
  
The Lemma says that $A^*$ has compact resolvent and so does $A_1$  
since $A^*$ extends $A_1$ and the latter is closed.

\subsection{$A_1$ is semibounded}
(See also \cite{AG} sec. 85)  

Let $(\lambda_k(A_0))_{k\in \N}$ denote the (ordered spectrum) 
of $A_0$ and consider $n$ such that $\lambda_n(A_0)\geq 0.$ 

Consider $F\subset \dom(A_1)$of dimension $n+d.$ Denote by 
$F_1\,=\,F\cap \dom(\bar{A})= F \cap \ker(\id - \pi_0).$ 
Theorem \ref{VN} implies that $\ker (\id-\pi_0)_{|\dom(A_1)}$ 
is of dimension $d,$ and thus, since $F$ is of dimension $n+d,$
$F_1$ is of dimension at least $n.$

Moreover $F_1\subset \dom(\bar{A})\subset \dom (A_{1,0}),$ thus, 
for all $u\in F_1$ we have 
\[
\la A_1 u,u\ra\,=\,\la A_0 u,u\ra .\]
Since $F_1\subset F$ it follows that 
\[
\max_{u\in F, u\neq 0} \frac{\la A_1u,u\ra}{\| u\|^2}\geq
\max_{u\in F_1, u\neq 0} \frac{\la A_0 u,u\ra}{\| u\|^2}.   
\]
Since $\dim F_1\geq n$ and $\lambda_n\geq 0$, it follows from 
the min-max principle that the right-hand side is non-negative. 

We thus obtain, that for all $F\subset \dom(A_1)$ of dimension $n+d$ 
we have 
\[
\max_{u\in F, u\neq 0} \frac{\la A_1u,u\ra}{\| u\|^2}\geq 0.
\]
This implies that $A_1$ has at most $n+d-1$ negative eigenvalues. 
(otherwise the subspace generated by $n+d$ negative eigenvalues 
would contradict the preceding bound).  

\subsection{Comparing $N_0$ and $N_1$}
According to the previous section we have that $A_1$ also is 
semibounded with compact resolvent so that we can denote by $(\lambda_k(A_1))_{k\in \N}$ 
its ordered spectrum. We also denote by $V_n^i$ the vector space 
generated by the first $n$ eigenvectors of $A_i.$

For any $n,$ set $F\,=\, V_{n+d}^1\cap \dom (\bar{A}).$ 
Making the same reasoning as previously we find that $F$ is of dimension at least $n$ and 
\[
\max_{u\in V_{n+d}, u\neq 0}\left  \{\frac{\la A_1u,u\ra}{\| u\|^2}\right \}\,\geq\,
\max_{u\in F, u\neq 0} \left\{ \frac{\la A_0 u,u\ra}{\| u\|^2}\right \}.   
\] 
By definition of $V_{n+d}$ the left-hand-side is $\lambda_{n+d}(A_1)$ 
and, using the min-max principle, the right-hand side is bounded below 
by $\lambda_n(A_0).$
Thus, we obtain, 
\[
\forall n,~~~\lambda_{n+d}(A_1)\geq \lambda_n(A_0).
\]
Observe that $N_0(E)$ is characterized by 
\[
\lambda_{N_0(E)}(A_0)\leq E < \lambda_{N_0(E)+1}(A_0)
\]
Using this and the preceding inequality we find that, 
for all $E$ we have 
\[ 
E\leq \lambda_{N_0(E)+d+1}(A_1)
\]
An thus, for all $E$ we have 
\[
N_1(E)\leq N_0(E)+d.
\]

Since $A_0$ and $A_1$ now play symmetric roles we also have 
\[
\forall n, ~ ~ ~ \lambda_{n+d}(A_0)\geq \lambda_n(A_1).
\]
and thus 
\[ N_0(E)\leq N_1(E)+d.\]
The final claim of the theorem follows.

\section{Applications}
\subsection{Quantum graphs}\label{qg}
Quantum graphs are now well-studied objects from mathematical 
physics (see \cite{Kuchment} for an introduction). 
A very rough way of defining a (finite) quantum graph is the following. 

Pick $K$ positive real numbers (the lengths), set 
$\HH= \bigoplus_{i\,=\,1}^{K} L^2(0,L_i)$ and $\Dcal\,=\, 
\bigoplus_{i\,=\,1}^{K} \CI_0(0,L_i)$ and define $A$ on $\Dcal$ 
by $ A(u_1\oplus u_2 \oplus \cdots \oplus u_K)\,=\,-(u_1''\oplus u_2''\oplus\cdots \oplus u_K'').$
This operator is symmetric. Any self-adjoint extension of $A$ is called a quantum graph. 

\begin{remk}
Usually quantum graphs are constructed starting from 
a combinatorial graph given by its edges and vertices. This combinatoric 
data is actually hidden in the choice of the self-adjoint condition.
\end{remk}

One basic question is to understand to which extent the knowledge of the spectrum 
determines the quantum graph (i.e. the lengths and the boundary condition). 

It is known that there are isospectral quantum graphs \cite{BPBS} and the 
following theorem says that, as far as counting 
function is concerned it is quite difficult to 
determine the self-adjoint condition.

\begin{thm}
For any quantum graph with $K$ edges the following 
bound holds :
\[ 
\left| N(E)- \frac{\Lcal}{\pi}E_+^\und \right | \leq 3K
\]
where $\Lcal:=\sum_{i=1}^K L_i$ and $E_+\,:=\max(E,0).$
\end{thm}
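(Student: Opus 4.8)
The plan is to deduce the estimate from the main theorem of \secref{secthm}, applied to a judiciously chosen reference extension, and then to compute the counting function of that reference by hand. The first ingredient is the deficiency index of $A$. The adjoint $A^*$ is the maximal operator $u\mapsto -u''$ on $\bigoplus_i H^2(0,L_i)$ with no boundary condition imposed, so on each edge the equation $A^*u=u$ (resp. $A^*u=-u$) is a second order ODE with a two-dimensional space of smooth solutions; since each interval $(0,L_i)$ is bounded, all these solutions lie in $L^2$, whence $\dim\Kcal^{\pm}=2$ per edge. Summing over the $K$ edges gives
\[
d_+\,=\,d_-\,=\,d\,=\,2K\,<\,\infty,
\]
so that $A$ has equal finite deficiency indices and the main theorem is applicable.

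Next I would take $A_0$ to be the Dirichlet extension, namely the direct sum over the edges of the Dirichlet Laplacians, with domain $\bigoplus_i\{u\in H^2(0,L_i):u(0)=u(L_i)=0\}$. This operator extends $A$ (it contains $\Dcal$ and acts there as $-u''$), it is nonnegative and hence semibounded, and each summand has compact resolvent so $A_0$ does too. Thus hypotheses (i) and (ii) hold, and the main theorem yields, for every quantum graph $A_1$, i.e. every self-adjoint extension of $A$,
\[
\forall E\in\R,\qquad \left|N_1(E)-N_0(E)\right|\,\leq\,d\,=\,2K.
\]

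It then remains to compare $N_0$ with the Weyl term. The Dirichlet spectrum on $(0,L_i)$ is $\{(\pi n/L_i)^2:n\in\N,\ n\geq 1\}$, so the edge-$i$ counting function equals $\lfloor\tfrac{L_i}{\pi}E_+^\und\rfloor$ and
\[
\left|N_0(E)-\frac{\Lcal}{\pi}E_+^\und\right|\,\leq\,\sum_{i=1}^{K}\left|\Big\lfloor\tfrac{L_i}{\pi}E_+^\und\Big\rfloor-\tfrac{L_i}{\pi}E_+^\und\right|\,\leq\,K.
\]
Combining the two displays by the triangle inequality gives the claimed bound $|N_1(E)-\frac{\Lcal}{\pi}E_+^\und|\leq 2K+K=3K$, with $N_1=N$ the counting function of the given quantum graph.

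The computations are all elementary; the only step demanding genuine care is the deficiency-index count. It is the value $d=2K$ — two units per edge, one for each finite endpoint at which $\dom(A^*)$ is left unconstrained, in sharp contrast with the half-line case where one of the two deficiency solutions fails to be square integrable — that supplies two of the three units in the final constant, the remaining unit per edge being the floor-function discrepancy in the one-dimensional Weyl law for $A_0$.
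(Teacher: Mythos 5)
Your proof is correct and follows essentially the same route as the paper: choose the decoupled Dirichlet extension as reference, compute its counting function (error at most $K$ from the Weyl term), invoke the main theorem with $d=2K$, and conclude by the triangle inequality. The only difference is that you spell out the deficiency-index computation and the verification of hypotheses (i) and (ii), which the paper leaves as a ``straightforward computation.''
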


\begin{proof}
Fix $K$ and the choice of the lengths. 
We choose one particular self-adjoint extension $A_D$ 
that consists in decoupling all the edges and putting Dirichlet boundary 
condition on each end of each edge. 
The spectrum is then easily computed and we have 
\[ 
\spec(A_D)\,=\, \bigcup_{i=1}^K \left\{ \frac{k^2\pi^2}{L_i^2},~ k\in \N \right\}.
\]
In particular we have (denoting by $N_D$ the counting function of the Dirichlet extension)
\[ 
N_D(E)\,=\,\sum_{i=1}^K \left[ \frac{L_i}{\pi}E_+^\und \right ],
\]
where $[\cdot]$ denotes the integer part. 
In particular, we have 
\[
\left | N_D(E)- \frac{\Lcal}{\pi}E_+^\und \right | \,\leq K.
\]
We now compute the deficiency indices. A straightforward computation 
yields $d_+\,=\,d_-\,=\,2K.$
And thus, using the main theorem and triangular inegality 
we obtain that for any quantum graph
\[
\left | N_D(E)- \frac{\Lcal}{\pi}E_+^\und \right | \,\leq 3K,
\]     
indepently of the choice of lengths.
\end{proof}

\subsection{Pseudo-Laplacian with Delta potentials}\label{PL}
It is known that on Riemannian manifolds of dimension $2$ or $3$ 
it is possible to add so-called Delta potentials. 
From a spectral point of view this corresponds to choosing a finite set of points $P$ 
and to consider the Riemannian Laplace operator 
defined on smooth functions with support in $M\backslash P.$ (see \cite{CdV} for instance)

\begin{remk}
This construction is also possible starting from a bounded domain in $\R^2$ with, say, 
Dirichlet boundary condition. We obtain the so-called \v{S}eba billiards 
(See \cite{KMW} for instance)
\end{remk} 

There are several self-adjoint extensions and (a slight 
generalization of) Lemma of \cite{CdV} proves that, 
in this setting the deficiency indices are $d:=\mbox{card P}$.
Following Colin de Verdi\`ere we call any such self-adjoint extension 
a {\em Pseudo-Laplacian with $d$ Delta potentials}

Application of the theorem gives the following.

\begin{thm}
Let $M$ be a closed Riemannian manifold of dimension $2$ or $3.$
Let $N_0$ be the counting function of the (standard) Laplace operator 
on $M$. For any pseudo-laplacian with $d$ Delta potential the following 
holds 
\[
\left| N(E)-N_0(E) \right | \leq d
\]
\end{thm}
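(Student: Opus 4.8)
The plan is to reduce this final theorem directly to the main theorem of Section~\ref{secthm}. First I would verify that the abstract hypotheses are met in the pseudo-Laplacian setting. Take $A$ to be the symmetric operator obtained by restricting the Riemannian Laplace operator to smooth functions supported in $M\backslash P$, where $P$ is the chosen finite set of $d$ points. The distinguished self-adjoint extension $A_0$ is the \emph{standard} Laplace operator on the closed manifold $M$, which is a genuine self-adjoint extension of $A$ (every smooth function supported away from $P$ lies in the domain of the standard Laplacian, and the two operators agree there). Since $M$ is closed, the standard Laplacian has compact resolvent and is bounded below (in fact nonnegative), so hypotheses (i) and (ii) of the main theorem hold for $A_0$.

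Second, I would record the deficiency index computation, which the excerpt attributes to a slight generalization of the Lemma of \cite{CdV}: in dimension $2$ or $3$ one has $d_+ = d_- = \operatorname{card} P = d < \infty$. The point is that the indicial/capacity analysis near each point $p\in P$ contributes exactly one dimension to each deficiency space in these low dimensions (this is precisely where the dimensional restriction enters, since in higher dimensions the singular solution $\ker(A^*\mp\mathrm{id})$ fails to lie in $L^2$ near the point). With finite and equal deficiency indices established, the abstract framework of Von~Neumann theory (Theorem~\ref{VN}) applies and the symmetric operator $A$ admits self-adjoint extensions parameterized as in that theorem.

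Third, with $A$, $A_0$, and the value of $d$ in hand, I would simply invoke the main theorem. Any pseudo-Laplacian with $d$ Delta potentials is by definition a self-adjoint extension $A_1$ of $A$; conclusion (1) of the main theorem guarantees that $A_1$ too has compact resolvent and is semibounded, so its counting function $N(E)$ is well defined, and conclusion (2) yields
\[
\forall E\in\R,\qquad \left| N(E)-N_0(E)\right| \leq d.
\]
This is exactly the claimed bound, so no further estimate is needed.

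The only genuine obstacle, and hence the step deserving care, is the deficiency index computation and the verification that $A_0$ is really a self-adjoint extension of the \emph{same} symmetric operator $A$ whose extensions produce the pseudo-Laplacians. Everything else is a formal citation of the abstract theorem. One must be slightly careful that the standard Laplacian and the pseudo-Laplacians all restrict to the single common symmetric core $A$ defined on $C^\infty$ functions vanishing near $P$; once this common core is fixed the comparison is automatic. I would therefore present the proof as: identify the common symmetric operator and its distinguished extension $A_0$, cite \cite{CdV} for $d_\pm = d$, check compactness of the resolvent and semiboundedness for $A_0$ on the closed manifold, and conclude by the main theorem.
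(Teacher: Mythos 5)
Your proposal is correct and matches the paper's (essentially implicit) argument exactly: the paper likewise takes $A$ to be the Laplacian restricted to smooth functions supported in $M\backslash P$, cites the Lemma of \cite{CdV} for $d_\pm = d = \mbox{card}\, P$ in dimensions $2$ and $3$, notes that the standard Laplacian on a closed manifold is a semibounded self-adjoint extension with compact resolvent, and concludes by direct application of the main theorem of Section~\ref{secthm}. Your added remarks --- that the standard Laplacian and the pseudo-Laplacians share the common symmetric core, and that the dimensional restriction is what keeps the deficiency spaces nontrivial yet finite --- are accurate and, if anything, make explicit checks the paper leaves to the reader.
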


It should be noted first 
that the bound depends only on the number of Delta potentials 
and not on their location, and second that the effect of adding 
Delta potentials is much smaller than the usual known remainder 
terms in Weyl's law for $N_0(E).$     

\subsection{Others}
There are two other settings where finite deficiency indices 
occur that are worth mentioning. In both case one could apply the 
theorem to get a Weyl's asymptotic formula independent 
of the choice of the self-adjoint condition up to a $O(1)$ term. 
These are 
\begin{enumerate}
\item Manifolds with conical singularities. The common 
self-adjoint extension in use corresponds to Friedrichs extension and 
if one changes the self-adjoint extensions at the conical points 
(see \cite{Kokotov,KLP}) then the counting function is affected only by 
some bounded correction. Observe that the deficiency index associated with 
the Laplace operator on the cone of opening angle $\alpha$ is 
$2[\frac{\alpha}{2\pi}]-1$ (with $[\cdot]$ the integer part).

\item The so-called {\em hybrid manifolds} that are studied in 
\cite{PRY} which are obtained by, in some sense, grafting  
quantum graphs onto higher dimensional manifolds. Here again, 
one can compare the counting function of the chosen self-adjoint extension 
to the natural one which is obtained when all the parts are decoupled. 
\end{enumerate}

\end{document}